\theoremstyle{plain}
    \newtheorem{thm}{Theorem}[section]
    \newtheorem{corollary}[thm]{Corollary}
    \newtheorem{example}[thm]{Example}
    \newtheorem{lemma}[thm]{Lemma}
    \newtheorem{proposition}[thm]{Proposition}
    \newtheorem{theorem}[thm]{Theorem}
\theoremstyle{definition}
    \newtheorem{definition}[thm]{Definition}
    \newtheorem{remark}[thm]{Remark}
\theoremstyle{remark}
    \newtheorem{setup}[thm]{}
\newcommand{\Q}{\mathbb{Q}}
\newcommand{\R}{\mathbb{R}}
\newcommand{\Z}{\mathbb{Z}}
\newcommand{\Gal}{\operatorname{Gal}}
\newcommand{\rank}{\operatorname{rank}}
\newcommand{\Supp}{\operatorname{Supp}}
\newcommand{\Codim}{\operatorname{codim}}
\newcommand{\alg}{\mathrm{alg}}
\newcommand{\reg}{\mathrm{reg}}
\begin{document}

\title[Characterizations of Toric Varieties via Polarized Endomorphisms]
{Characterizations of Toric Varieties via Polarized Endomorphisms}

\author{Sheng Meng, De-Qi Zhang}

\address
{
\textsc{Department of Mathematics} \endgraf
\textsc{National University of Singapore, 10 Lower Kent Ridge Road,
Singapore 119076
}}
\email{ms@u.nus.edu}
\email{matmeng@nus.edu.sg}
\address
{
\textsc{Department of Mathematics} \endgraf
\textsc{National University of Singapore, 10 Lower Kent Ridge Road,
Singapore 119076
}}
\email{matzdq@nus.edu.sg}

\begin{abstract}
Let $X$ be a normal projective variety and $f:X\to X$ a non-isomorphic polarized endomorphism.
We give two characterizations for $X$ to be a toric variety.

First we show that if $X$ is $\Q$-factorial and $G$-almost homogeneous for some linear algebraic group $G$ such that $f$ is $G$-equivariant, then $X$ is a toric variety.

Next we give a geometric characterization:
if $X$ is of Fano type and smooth in codimension 2 and
if there is an $f^{-1}$-invariant reduced divisor $D$ such that $f|_{X\backslash D}$ is quasi-\'etale and $K_X+D$ is $\Q$-Cartier, then $X$ admits a quasi-\'etale cover $\widetilde{X}$ such that $\widetilde{X}$ is a toric variety and $f$ lifts to $\widetilde{X}$. In particular, if $X$ is further assumed to be smooth, then $X$ is a toric variety.
\end{abstract}

\subjclass[2010]{
14M25,  
32H50, 
20K30, 
08A35.  
}

\keywords{polarized endomorphism, toric pair, complexity}

\maketitle

\section{Introduction}

We work over an algebraically closed field $k$ of characteristic zero.
Let $X$ be a normal projective variety of dimension $n \ge 1$.

A surjective endomorphism $f : X \to X$ is {\it polarized},
if there is an ample Cartier divisor $H$ on $X$ such that we have $f^{\ast}H \sim qH$
(linear equivalence) for some integer $q>1$.
See \cite{Zsw} for some conjectures on polarised endomorphisms.

The variety $X$ is said to be {\it toric} or a {\it toric variety}
if $X$ contains an algebraic torus $T = (k^*)^n$ as an (affine) open dense subset such that
the natural multiplication action of $T$ on itself extends to an action on the whole variety $X$.
In this case, let $D:=X\backslash T$, which is a divisor; the pair $(X,D)$ is said to be a {\it toric pair}.

We observe that a toric variety (e.g. the projective space) has lots of symmetries.
The purpose of this short paper is to use symmetries to characterize toric pairs.

We first give a characterization of toric varieties via polarized endomorphisms and linear algebraic group actions.
Let $G$ be a linear algebraic group acting on $X$.
We say $X$ is {\it $G$-almost homogeneous} if there exists an open dense $G$-orbit in $X$.

\begin{theorem}\label{thm-torus}
Let $f:X\to X$ be a polarized endomorphism of a $G$-almost homogeneous normal projective variety $X$ with $G$ being a linear algebraic group.
Assume further the following conditions.
\begin{itemize}
\item[(i)] Let $U$ be the open dense $G$-orbit in $X$ and $D$ the codimension-$1$ part of $X \setminus U$.
The Weil divisor $K_X+D$ is $\Q$-Cartier.
\item[(ii)]
The endomorphism $f$ is $G$-equivariant in the sense: there is a surjective
homomorphism $\varphi: G \to G$ such that
$f\circ g=\varphi(g)\circ f$ for all $g$ in $G$.
\end{itemize}
Then $(X, D)$ is a toric pair.
\end{theorem}

Let $(X,\Delta)$ be a log pair.
The {\it complexity} $c = c(X,\Delta)$ of $(X,\Delta)$ is defined as
$$c:=\inf\{n+\dim_{\R}(\sum \R[S_i])-\sum a_i\,|\, \sum a_i S_i\le \Delta, a_i\ge 0, S_i\ge 0\};$$
see Definition \ref{def-comp}.
Brown, ${\rm M}^c$Kernan, Svaldi and Zong recently gave a geometric characterization of toric varieties involving the complexity; see \cite[Theorem 1.2]{BMSZ} or Theorem \ref{thm-bmsz}.
Their result is a special case of a conjecture of Shokurov, which is stated in the relative case (cf.~\cite{Sho}).
A simple version of their result shows that
{\it if $(X,\Delta)$ is a log canonical pair such that $\Delta$ is reduced, $-(K_X + \Delta)$ is nef, and
$c(X,\Delta)$ is non-positive, then $(X,\Delta)$ is a toric pair};
see Theorem \ref{thm-bmsz} and Remark \ref{rmk-comp}.

A finite surjective morphism $h : Y \to Z$ between normal varieties is {\it quasi-\'etale} if $h$ is \'etale
outside a codimension-$2$ subset of $Z$.
We show that the complexity condition of \cite[Theorem 1.2]{BMSZ} holds true in the following case.

\begin{theorem}\label{thm-pitrivial}
Let $X$ be a normal projective variety
which is smooth in codimension $2$, and $D\subset X$ a reduced divisor such that
\begin{itemize}
\item[(i)]
there is a Weil $\Q$-divisor $\Gamma$ such that the pair $(X, \Gamma)$ has only klt singularities;
\item[(ii)]
there is a polarized endomorphism $f:X\to X$ such that $D$ is $f^{-1}$-invariant and $f|_{X\backslash D}$ is quasi-\'etale;
\item[(iii)]
the algebraic fundamental group
$\pi_1^{\alg}(X_{\reg})$ of the smooth locus $X_{\reg}$ of $X$ is trivial
(this holds when $X$ is smooth and rationally connected); and
\item[(iv)]
the irregularity $q(X) := h^1(X, \mathcal{O}_X)$ is zero (this holds when $X$ is rationally connected).
\end{itemize}
Then the complexity $c(X,D)$ is non-positive.
\end{theorem}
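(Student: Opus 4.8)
The plan is to exhibit an explicit competitor in the infimum defining $c(X,D)$: take all prime components $D=\sum_{i=1}^{d}D_i$ with coefficients $a_i=1$. This is an admissible choice, so directly from Definition \ref{def-comp} we get $c(X,D)\le n+\dim_{\R}\big(\sum_i\R[D_i]\big)-d$. Hence it suffices to show that the classes $[D_1],\dots,[D_d]$ admit at least $n$ independent linear relations, i.e. $\dim_{\R}\big(\sum_i\R[D_i]\big)\le d-n$. Writing $U:=X\setminus D$, the relations among the $[D_i]$ in $\Cl(X)$ are exactly the principal divisors supported on $D$, which are cut out by the units on $U$; so the span of the $[D_i]$ drops by $\rho^{*}:=\operatorname{rank}\big(\OO^{*}(U)/k^{*}\big)$, and the whole problem reduces to proving $\rho^{*}\ge n$.

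First I would establish a logarithmic ramification identity. Since $f^{-1}(D)=D$ and $f$ is finite surjective, the induced self-map $\tau$ of the finite set of components is a surjection of a finite set, hence a bijection; so each $D_i$ carries a single ramification index $e_i$ with $f^{*}D_{\tau(i)}=e_iD_i$, and because $f|_{U}$ is quasi-\'etale the reduced ramification divisor is $R_f=\sum_i(e_i-1)D_i$. Substituting into $f^{*}K_X=K_X-R_f$ and using $f^{*}D=\sum_i e_iD_i$ yields $f^{*}(K_X+D)=K_X+D$ as Weil divisor classes. As $X$ is smooth in codimension $2$, the reflexive sheaf $\mathcal E:=\Omega^{[1]}_X(\log D)$ has well-defined Chern classes $c_1,c_2$ with $c_1(\mathcal E)=K_X+D$, and the identity upgrades to an isomorphism $f^{*}\mathcal E\isom\mathcal E$. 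By the spectral properties of polarized endomorphisms ($f^{*}$ acts on $N^1(X)_{\R}$ with all eigenvalues of modulus $q>1$), the only $f^{*}$-fixed class is $0$, so $c_1(\mathcal E)=K_X+D\equiv 0$. Using $q(X)=0$ to remove the continuous part of $\Pic$ and $\pi_1^{\alg}(X_{\reg})=1$ to remove torsion coming from quasi-\'etale covers, I would upgrade this to $K_X+D\sim_{\Q}0$, so $(X,D)$ is a log canonical Calabi--Yau pair.

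The heart of the argument, and the step I expect to be the main obstacle, is to prove that $\mathcal E=\Omega^{[1]}_X(\log D)$ is trivial. The idea is that $f^{*}\mathcal E\isom\mathcal E$ forces the numerical characteristic classes of $\mathcal E$ to be fixed by $f^{*}$, whereas polarization rescales classes in $H^{2i}$ by positive powers of $q$; combined with $c_1(\mathcal E)\equiv 0$ and a Bogomolov--Miyaoka type semipositivity of $\Omega^{[1]}_X(\log D)$ coming from $K_X+D\sim_{\Q}0$, this should force the relevant discriminant to vanish and $\mathcal E$ to be numerically flat. This is precisely where hypothesis that $X$ is smooth in codimension $2$ is needed, since it is what makes $c_2$ available. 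A numerically flat reflexive sheaf corresponds to a representation of $\pi_1^{\alg}(X_{\reg})$, which is trivial by (iii); hence $\mathcal E\isom\OO_X^{\oplus n}$. Making the numerical-flatness implication precise in the klt (rather than smooth) setting is the delicate point, and is where I expect the real work to lie.

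Finally I would read off $\rho^{*}\ge n$ from the triviality of $\mathcal E$. Because $q(X)=0$, the mixed Hodge structure on $H^1(U)$ is pure of weight $2$ and Hodge--Tate, and the $d{\log}$ map identifies $H^0(X,\Omega^{[1]}_X(\log D))$ with $\big(\OO^{*}(U)/k^{*}\big)\otimes_{\Z}\C$; thus $\rho^{*}=h^0(\mathcal E)=n$. Consequently $\dim_{\R}\big(\sum_i\R[D_i]\big)=d-\rho^{*}\le d-n$, and the bound from the first paragraph gives $c(X,D)\le n+(d-n)-d=0$, as required.
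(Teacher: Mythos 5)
Your proposal follows the same architecture as the paper's proof: everything hinges on showing that $\mathcal E:=\hat{\Omega}_X^1(\log D)$ is trivial, using (a) the polarized endomorphism to kill the relevant Chern numbers and (b) a Greb--Kebekus--Peternell type statement (cf.~\cite[Theorem 1.20]{GKP}) that flat/numerically flat reflexive sheaves on klt spaces with $\pi_1^{\alg}(X_{\reg})$ trivial are free. Your final bookkeeping via units on $U=X\setminus D$ and relations in $\Cl(X)$ is an equivalent reformulation of the paper's Theorem \ref{thm-complexity}: the residue exact sequence on a log resolution, whose connecting map sends $1_{\widetilde{D}_i}\mapsto c_1(\widetilde{D}_i)$, computes exactly the correspondence between global log $1$-forms and relations among the boundary classes, once $\widetilde{q}(X)=0$. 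On that point, note that purity of the mixed Hodge structure on $H^1$ of (a log resolution of) $U$ requires $\widetilde{q}(X)=0$, not merely $q(X)=0$; the paper passes from one to the other using that klt (hypothesis (i)) implies rational singularities, a step you should make explicit.

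The genuine gap is at what you yourself flag as ``the heart'': numerical flatness of $\mathcal E$. Vanishing of $c_1(\mathcal E)\cdot H^{n-1}$, $c_1(\mathcal E)^2\cdot H^{n-2}$ and $c_2(\mathcal E)\cdot H^{n-2}$ does \emph{not} by itself imply flatness: for a ruled surface $Y$ with fiber class $F$, the sheaf $\mathcal{O}_Y(F)\oplus\mathcal{O}_Y(-F)$ has $c_1=0$ and $c_2=-F^2=0$ but is neither semistable nor numerically flat. One must first establish $H$-slope semistability of $\mathcal E$, and your proposed source for it --- ``Bogomolov--Miyaoka type semipositivity coming from $K_X+D\sim_{\Q}0$'' --- runs the implication backwards: Bogomolov-type inequalities are \emph{consequences} of semistability, not a means of proving it, and no semipositivity of the log cotangent sheaf is available in this singular setting. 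The paper closes precisely this hole with Proposition \ref{mix-prop}(2), again using the endomorphism: if $\mathcal F\subset\mathcal E$ had $\mu_H(\mathcal F)>0$, then pulling back by $g=f^k$ and pushing forward from the big open set $g^{-1}(U)$ (where the natural map $f^*\mathcal E\to\mathcal E$ is an isomorphism; it is \emph{not} a global isomorphism, contrary to your assertion, only an isomorphism outside a codimension-$3$ set) produces subsheaves of $\mathcal E$ of slope $q^k\mu_H(\mathcal F)\to\infty$, contradicting the finiteness of $\sup\{\mu_H(\mathcal F)\mid \mathcal F\subset\mathcal E\}$. With semistability supplied this way, \cite[Theorem 1.20]{GKP} applies and the rest of your argument goes through. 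A secondary imprecision: $K_X+D$ is not assumed $\Q$-Cartier in this theorem, so statements such as ``$K_X+D\equiv 0$'' and ``$K_X+D\sim_{\Q}0$'' are not well formed as stated; the paper avoids this by working only with the intersection numbers $c_1(\mathcal E)\cdot H^{n-1}$, etc.
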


\begin{remark} Let $X$ be an $n$-dimensional smooth Fano variety of Picard number one
and $D\subset X$ a reduced divisor.
Assume the existence of a non-isomorphic surjective endomorphism $f : X \to X$ such that  $D$ is $f^{-1}$-invariant and $f|_{X\backslash D}$ is \'etale.
Hwang and Nakayama show that $X$ is isomorphic to $\mathbb{P}^n$ and $D$ is a simple normal crossing divisor consisting of $n + 1$ hyperplanes;
see \cite[Theorem 2.1]{HN}.
In particular, $(X,D)$ is a toric pair.
Indeed, their argument shows that the complexity $c(X,D)$ is non-positive.
Our Theorem \ref{thm-pitrivial} follows their idea and tries to generalize their result to the singular case.
A key step of ours is to verify that $\hat{\Omega}_X^1(\log D)$
is free, i.e., isomorphic to $\mathcal{O}_X^{\oplus n}$;
see \cite[Proposition 2.3]{HN} and Theorem \ref{HN-thm}.
\end{remark}

As applications of Theorem \ref{thm-pitrivial}, we have the following characterizations
for toric pairs.
A well known conjecture asserts that projective spaces are the only
smooth projective Fano varieties of Picard number one admitting an endomorphism $f$ of degree $\ge 2$
(this kind of $f$ is automatically polarized).
One may generalize it to the case of arbitrary Picard number.
Below is a partial solution to it.

\begin{corollary}\label{smooth-cor} Let $X$ be a rationally connected smooth projective variety and $D\subset X$ a reduced divisor.
Suppose $f:X\to X$ is a polarized endomorphism such that $D$ is $f^{-1}$-invariant and $f|_{X\backslash D}$ is \'etale.
Then $(X,D)$ is a toric pair.
\end{corollary}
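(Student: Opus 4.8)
The plan is to deduce Corollary \ref{smooth-cor} from Theorem \ref{thm-pitrivial} together with the complexity criterion of \cite{BMSZ} (Theorem \ref{thm-bmsz}). The first task is to verify that the four hypotheses of Theorem \ref{thm-pitrivial} are satisfied in the smooth rationally connected setting. Since $X$ is smooth, it is automatically smooth in codimension $2$, and the \'etale condition on $f|_{X\backslash D}$ is in particular quasi-\'etale, giving hypothesis (ii). For hypothesis (i), I would invoke the fact that a smooth rationally connected variety is of Fano type: rational connectedness forces $-K_X$ to be suitably positive so that one can choose a $\Q$-divisor $\Gamma\ge 0$ with $(X,\Gamma)$ klt (indeed rationally connected smooth projective varieties admit such a boundary making them klt Calabi--Yau or Fano type). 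Hypotheses (iii) and (iv) are handled by the parenthetical remarks already recorded in the statement of Theorem \ref{thm-pitrivial}: for a smooth rationally connected $X$ one has $X_{\reg}=X$ simply connected (rationally connected smooth projective varieties are simply connected, hence $\pi_1^{\alg}(X)=1$), and $q(X)=h^1(X,\OO_X)=0$ since $h^i(X,\OO_X)=0$ for $i>0$ on a rationally connected variety.

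With all four hypotheses checked, Theorem \ref{thm-pitrivial} yields the complexity bound $c(X,D)\le 0$. The second task is then to feed this into the geometric characterization. I would apply the simple version of \cite{BMSZ} quoted in the excerpt (Theorem \ref{thm-bmsz}): I must exhibit that $(X,D)$ is log canonical with $D$ reduced and $-(K_X+D)$ nef. Reducedness of $D$ is given. For the log canonical and nefness conditions, the key input is the polarized \'etale structure: because $f^{-1}(D)=D$ and $f|_{X\backslash D}$ is \'etale, the ramification formula gives $K_X+D = f^*(K_X+D)$ (the log ramification divisor vanishes off $D$, and along $D$ the equality $f^{-1}(D)=D$ with reducedness forces the log pullback to be exactly $K_X+D$), so $K_X+D$ is an $f$-fixed $\Q$-Cartier divisor. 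Combined with $f^*H\sim qH$ for the polarization, this pseudo-effective/anti-effective balancing forces $K_X+D$ to be numerically trivial or anti-nef; concretely, a divisor fixed by a polarized pullback must have numerical class scaling trivially, which pins down $-(K_X+D)$ as nef (indeed numerically trivial). Smoothness of $X$ with $D$ reduced and the above makes $(X,D)$ log canonical.

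Having verified the hypotheses of Theorem \ref{thm-bmsz}, I conclude that $(X,D)$ is a toric pair, which is exactly the assertion of the corollary. The main obstacle I anticipate is the clean deduction of the log canonicity and the nefness of $-(K_X+D)$ from the dynamical data; in particular, one must argue carefully that the \'etale-in-codimension-one condition, together with $f^{-1}(D)=D$ and the polarization $f^*H\sim qH$, forces the log canonical divisor $K_X+D$ to be numerically trivial (so that $-(K_X+D)$ is nef) rather than merely controlling it up to the ramification divisor. This requires combining the logarithmic ramification formula with the fact that a polarized endomorphism multiplies numerical classes of any $f$-invariant $\Q$-Cartier divisor in a constrained way, an argument that is standard in the dynamics literature but must be set up so that the smoothness of $X$ guarantees $K_X+D$ is genuinely $\Q$-Cartier and the pair is log canonical. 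Everything else reduces to citing the rational-connectedness vanishing and simple-connectivity facts and to the verbatim application of Theorems \ref{thm-pitrivial} and \ref{thm-bmsz}.
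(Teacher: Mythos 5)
Your overall route is the same as the paper's: verify the hypotheses of Theorem \ref{thm-pitrivial} to get $c(X,D)\le 0$, then feed this into Theorem \ref{thm-bmsz} after checking that $(X,D)$ is log canonical with $-(K_X+D)$ nef. The nefness step is fine: $K_X+D=f^*(K_X+D)$ by the logarithmic ramification formula, and then $K_X+D\equiv 0$ by Lemma \ref{lem-eig}, exactly as in Remark \ref{rmk-comp}(2). But your justification of log canonicity is a genuine gap. You write that ``smoothness of $X$ with $D$ reduced and the above makes $(X,D)$ log canonical,'' and this is false: take $X=\mathbb{P}^2$ and $D$ a cuspidal cubic; then $X$ is smooth, $D$ is reduced, and $K_X+D\sim 0$, yet $(X,D)$ is not log canonical at the cusp. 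Log canonicity here is not a formal consequence of smoothness and numerical triviality; it is a nontrivial theorem about pairs admitting endomorphisms, namely \cite[Theorem 1.4]{BH} (Broustet--H\"oring), which is precisely what Remark \ref{rmk-comp}(2) invokes. (That theorem is also what rules out examples like the cuspidal cubic: such a pair admits no polarized endomorphism \'etale off $D$ with $f^{-1}(D)=D$.) Without this input, your application of Theorem \ref{thm-bmsz} is unjustified.

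A second, less serious, slip: to check hypothesis (i) of Theorem \ref{thm-pitrivial} you appeal to ``smooth rationally connected $\Rightarrow$ Fano type.'' That implication is false in general (the blow-up of $\mathbb{P}^2$ at nine or more very general points is rational, hence rationally connected, but not of Fano type). Fortunately the hypothesis you actually need is much weaker: since $X$ is smooth, the pair $(X,\Gamma)$ with $\Gamma=0$ is already klt, which is all that (i) asks for. Your checks of (ii), (iii), (iv) --- \'etale implies quasi-\'etale, and simple connectedness plus $q(X)=0$ for smooth rationally connected $X$ --- are correct and match the parentheticals in the statement of Theorem \ref{thm-pitrivial}.
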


We say that a normal projective variety $X$ is of {\it Fano type} if there is a Weil $\Q$-divisor $\Delta$ such that the pair $(X,\Delta)$ has only klt singularities and $-(K_X+\Delta)$ is an ample $\Q$-Cartier divisor.
The assumption below of $X$ being of Fano type is necessary,
since a normal projective toric variety is known to be of Fano type.

\begin{corollary} (cf.~Remark~\ref{P_quot})\label{toric-lift-cor}
Let $f:X\to X$ be a polarized endomorphism of a normal projective variety $X$ of Fano type which is smooth in codimension $2$.
Let $D\subset X$ be an $f^{-1}$-invariant reduced divisor such that $f|_{X\backslash D}$ is quasi-\'etale and $K_X+D$ is $\Q$-Cartier.
Then there exist a quasi-\'etale cover $\pi:\widetilde{X}\to X$ and a polarized endomorphism $\widetilde{f}:\widetilde{X}\to \widetilde{X}$ such that
\begin{itemize}
\item[(1)] the endomorphism $f$ lifts to $\tilde{f}$, i.e., $\pi\circ \widetilde{f}=f\circ \pi$, and
\item[(2)] the pair $(\widetilde{X},\widetilde{D})$ is toric, where $\widetilde{D}=\pi^{-1}(D)$.
\end{itemize}
\end{corollary}

The following example satisfies the conditions of both Theorems \ref{thm-torus} and \ref{thm-pitrivial}.

\begin{example}\label{eg1}
{\rm
Let $X := \mathbb{P}^n$ and
$$f: X \, \to \, X; \,\, [X_0: \cdots : X_n] \, \mapsto \, [X_0^q : \cdots : X_n^q]$$
the power map for some $q \ge 2$. Then $f$ is polarized with $\deg f = q^n$.
Let the algebraic torus $T = (k^*)^n$ act on $X$ naturally:
$T \times X \, \to \, X$  via
$$((t_1, \dots, t_n), [X_0 : \cdots : X_n]) \, \mapsto \, [X_0 : t_1 X_1 : \cdots : t_n X_n].$$
Let $D_i := \{X_i = 0\} \subseteq X$.
Denote by $D := \sum_{i=0}^n D_i$ and $U := X \setminus D$.
Let
$$\varphi : T \, \to \, T; \,\, (t_1, \dots, t_n) \, \mapsto \, (t_1^q, \dots, t_n^q)$$
which is a surjective homomorphism.

One can check easily that $X$ is $T$-almost homogeneous with $U$ the big open orbit and $f$ is $T$-equivariant
in the sense that $f \circ g = \varphi(g) \circ f$ for all $g$ in $T$.
Hence the conditions in Theorem \ref{thm-torus} are all satisfied.
Of course, $(X, D)$ is a toric pair.

Note that $D$ is $f^{-1}$-invariant, the restriction $f|_{X \setminus D}$ is \'etale, and $\pi_1(X)$ is trivial.
So the conditions in Theorem \ref{thm-pitrivial} are all satisfied.
Clearly, we have $c(X, D) \le n + 1 - (n+1) = 0$.

One may take the toric blowups of $X$ to get more examples satisfying all conditions in Theorems
\ref{thm-torus} and \ref{thm-pitrivial}.
}
\end{example}

\begin{remark}\label{P_quot}
In Corollary \ref{toric-lift-cor}, it is not always possible to take $\pi : \widetilde{X} \to X$
to be the identity map. In other words, the pair $(X,D)$ itself may not be toric.

Indeed,
let $\widetilde{f}:\widetilde{X}=\mathbb{P}^n\to \widetilde{X}$ be the power map
of degree $q^n$ for some $q \ge 2$ as defined in Example \ref{eg1}.
The symmetric group $S_{n+1}$ in $(n+1)$-letters acts naturally on $\mathbb{P}^{n}$ as (coordinates) permutations.
Let $\widetilde{D}_i:=\{X_i = 0\} \, \subset \, \widetilde{X}$ and $\widetilde{D}:=\sum \widetilde{D}_i$.
Then $S_{n+1}$ fixes $\widetilde{D}$ (as a set).
Choose a non-trivial subgroup $G \le S_{n+1}$ such that
\begin{itemize}
\item[(i)]
the group $G$ has no non-trivial pseudo-reflections (i.e. each non-trivial $g\in G$ fixes at most a codimension-$2$ subset) and hence the quotient map
$\pi : \widetilde{X} \, \to \, X:=\mathbb{P}^{n}/G$ is quasi-\'etale; and
\item[(ii)]
the variety $X$ has only terminal singularities and hence is smooth in codimension $2$.
\end{itemize}

For instance, take $n>2$ and $G = \langle (1, 2, \dots, n+1) \rangle \cong \Z/(n+1)\Z$;
see \cite[Lemma 3]{KX}.

Note that $g\circ \widetilde{f}=\widetilde{f}\circ g$ for any $g$ in $G$.
Hence $\widetilde{f}$ descends to a polarized endomorphism $f$ on $X$.
Let $D:=\pi(\widetilde{D})$. Then we have
$\pi^{-1}(D) = \widetilde{D}$ and $f^{-1}(D)=D$.

We now check that $f$ and the pair $(X, D)$ satisfy the assumptions of Corollary \ref{toric-lift-cor}.
Since both $\widetilde{f}|_{\widetilde{X}\backslash\widetilde{D}}$ and $\pi$ are quasi-\'etale, so is $f|_{X\backslash D}$.
Clearly, $X$ is $\Q$-factorial.
Since $K_{\widetilde{X}}=\pi^*K_X$ is anti-ample, so is $K_X$. Thus,
$X$ is a Fano variety and hence of Fano type.

The pair $(X, D)$ is not toric because the number of the irreducible components of $D$ is less than $n+1$, noting that
$G$ permutes the $\widetilde{D}_i$ non-trivially; see Remark \ref{rmk-comp2}.
Of course, its quasi-\'etale cover $(\widetilde{X}, \widetilde{D})$
is a toric pair.
\end{remark}

\par \noindent
{\bf Acknowledgement.}
The second author thanks Mircea Mustata for
valuable discussions and warm
hospitality during his visit to Univ. of Michigan in December 2016;
he is also supported by an ARF of National University of Singapore.
The authors thank the referee for suggestions to improve the paper.

\section{Preliminary results}
\begin{setup}{\bf Notation and terminology}\label{nat}.

{\rm
Let $X$ be a normal projective variety of dimension $n \ge 1$.
Define:
\begin{itemize}
\item[(1)]
$q(X)=h^1(X,\mathcal{O}_X)=\dim H^1(X,\mathcal{O}_X)$ (the irregularity); and
\item[(2)]
$\widetilde{q}(X)=q(\widetilde{X})$ with $\widetilde{X}$ a smooth projective model of $X$.
\end{itemize}
}

Let $D_1$ and $D_2$ be two Cartier $\R$-divisors on $X$.
Denote by $D_1\equiv D_2$ if $D_1$ is {\it numerically equivalent} to $D_2$, i.e., if
$(D_1 - D_2)\cdot C  = 0$ for every curve $C$  on $X$.


Denote by $X_{\reg}$ the {\it smooth locus} of $X$.
Let $U\subseteq X_{\reg}$ be an open dense subset. Let $\pi:Y\to X$ be a log resolution such that $\pi$ is isomorphic over $U$ and $B=Y \setminus \pi^{-1}(U)$ is a simple normal crossing (SNC) divisor.
Define the {\it log Kodaira dimension} of $U$ as $\bar{\kappa}(U)=\kappa(Y,K_Y+B)$
(Iitaka's $D$-dimension),
which is independent of the choice of the compactification $Y$ of $U$.

Given a reduced divisor $D$ on $X$, we define the sheaf $\hat{\Omega}_X^1(\log D)$ as follows.
Let $U\subseteq X$ be an open subset with $\Codim(X-U)\ge 2$ and $D\cap U$ being a normal crossing divisor.
Denote by $\Omega_U^1(\log D\cap U)$ the locally free sheaf of germs of logarithmic 1-forms on $U$ with poles only along $U \setminus D$.
Using the open immersion $j : U\hookrightarrow X$, we define
$$\hat{\Omega}_X^1(\log D)=j_*\Omega_U^1(\log D\cap U).$$
This is a reflexive coherent sheaf on $X$.

Throughout this paper, for a pair $(X,\Delta)$, the coefficients of $\Delta$ lie in $[0,1]$.
\end{setup}

The result below is frequently used and part of \cite[Proposition 2.9]{MZ}.

\begin{lemma}\label{lem-eig} Let $f:X\to X$ be a polarized endomorphism of a normal projective variety $X$. Suppose $f^*M\equiv M$ (numerical equivalence) for some Cartier $\R$-divisor $M$. Then $M\equiv 0$.
\end{lemma}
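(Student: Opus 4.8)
The plan is to study the induced linear action of $f^{\ast}$ on the finite-dimensional real vector space $\NS(X)_{\R}$ of numerical classes of $\R$-Cartier divisors, and to show that all of its eigenvalues have absolute value exactly $q$; since $f^{\ast}M\equiv M$ forces $1$ to be an eigenvalue whenever $M\not\equiv 0$, and $q>1$, this will yield $M\equiv 0$.

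First I would record the basic structure. Since $f^{\ast}H\sim qH$ with $H$ ample, $f^{\ast}H$ is ample, so $f$ is finite and surjective; computing $(f^{\ast}H)^n=(qH)^n$ with the projection formula gives $\deg f=q^n$. Both the pullback $f^{\ast}$ and the pushforward $f_{\ast}$ act linearly on $\NS(X)_{\R}$, and they satisfy $f_{\ast}f^{\ast}=(\deg f)\,\id=q^n\,\id$; in particular $f^{\ast}$ is invertible and $f_{\ast}=q^n(f^{\ast})^{-1}$.

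The key geometric input is positivity: both operators preserve the nef cone $\Nef(X)$. For $f^{\ast}$ this holds because the pullback of a nef class is nef; for $f_{\ast}$ it follows from the projection formula $f_{\ast}D\cdot C=D\cdot f^{\ast}C$ together with the fact that $f^{\ast}C$ is an effective $1$-cycle, so $f_{\ast}D\cdot C\ge 0$ for $D$ nef. Since $X$ is projective, $\Nef(X)$ is a full-dimensional salient closed convex cone, and $f^{\ast}$, $f_{\ast}$ restrict to automorphisms of it. Here I would invoke the Perron--Frobenius theory for cone-preserving maps (Birkhoff--Vandergraft): an automorphism of a proper cone with an eigenvector in the interior of the cone has that eigenvalue equal to its spectral radius. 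The class $[H]$ lies in the interior of $\Nef(X)$ and satisfies $f^{\ast}[H]=q[H]$ as well as $f_{\ast}[H]=q^n(f^{\ast})^{-1}[H]=q^{n-1}[H]$. Hence the spectral radius of $f^{\ast}$ equals $q$ and that of $f_{\ast}$ equals $q^{n-1}$.

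Combining these bounds pins down every eigenvalue. Each eigenvalue $\lambda$ of $f^{\ast}$ satisfies $|\lambda|\le q$, while the eigenvalues of $f_{\ast}=q^n(f^{\ast})^{-1}$ are exactly the numbers $q^n/\lambda$; the bound $|q^n/\lambda|\le q^{n-1}$ then forces $|\lambda|\ge q$. Therefore $|\lambda|=q$ for every eigenvalue of $f^{\ast}$. Finally, if $M\not\equiv 0$ then $f^{\ast}M\equiv M$ exhibits $[M]$ as a nonzero eigenvector of $f^{\ast}$ with eigenvalue $1$, contradicting $|1|=1\ne q$; hence $M\equiv 0$. I expect the eigenvalue-modulus step to be the main obstacle: the lower bound $|\lambda|\ge q$ genuinely requires passing to the dual operator $f_{\ast}$ and applying the cone Perron--Frobenius theorem, since the straightforward positivity of $f^{\ast}$ alone yields only $|\lambda|\le q$ and cannot by itself exclude the eigenvalue $1$.
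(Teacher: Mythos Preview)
Your argument is correct. Note, however, that the paper itself does not prove this lemma: it simply records it as ``part of \cite[Proposition 2.5]{MZ}'' and moves on. So there is no in-paper proof to compare against.

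That said, your approach---analyzing $f^{\ast}$ and $f_{\ast}=q^n(f^{\ast})^{-1}$ on $\NS(X)_{\R}$, using that both preserve the full-dimensional salient cone $\Nef(X)$ with the ample class $[H]$ as an interior eigenvector, and invoking cone Perron--Frobenius to squeeze every eigenvalue of $f^{\ast}$ to modulus exactly $q$---is precisely the standard route, and is essentially what is carried out in the cited reference. The one place to be slightly careful is the claim that an interior eigenvector forces its eigenvalue to be the spectral radius: this is indeed true (pair against a Perron eigenvector of the transpose acting on the dual cone), but it is worth stating the version of Birkhoff--Vandergraft you are using, since the most commonly quoted form only asserts existence of a boundary/cone eigenvector at the spectral radius. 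Your identification of the lower bound $|\lambda|\ge q$ via $f_{\ast}$ as the crucial step is exactly right.
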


\section{Proof of Theorem \ref{thm-torus}}

\begin{lemma}\label{toric} Let $X$ be a normal projective variety with an algebraic torus $T$-action.
Suppose $T$ has a
 Zariski-dense open orbit $U$ in $X$. Then $X$ is a toric variety.
\end{lemma}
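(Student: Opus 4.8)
The plan is to extract, from the dense orbit $U$, an $n$-dimensional subtorus $T' \cong (k^*)^n$ sitting inside $X$ as an open dense subset, and then to show that the given $T$-action descends to a $T'$-action under which $T'$ acts on $U = T'$ by translation. This is precisely the data exhibiting $X$ as a toric variety in the sense defined above.

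First I would fix a base point $x_0 \in U$ and set $H := \Stab_T(x_0)$, a closed subgroup of $T$. The orbit map $t \mapsto t \cdot x_0$ identifies $U$ with the homogeneous space $T/H$. Since $H$ is a closed subgroup of the torus $T$, the quotient $T' := T/H$ is diagonalizable with character group equal to $\{\chi \in X^*(T) : \chi|_H = 1\}$, a subgroup of the free abelian group $X^*(T)$ and hence itself free; therefore $T'$ is again a torus. Because $U$ is open dense in $X$ we have $\dim T' = \dim U = n$, so $T' \cong (k^*)^n$, and $U \cong T'$ as varieties with $x_0$ corresponding to the identity $e$.

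Next I would show that $H$ acts trivially not merely on $U$ but on all of $X$. By construction $H$ fixes every point of $U$, so the action morphism $a : H \times X \to X$ and the second projection $p : H \times X \to X$ agree on the dense open subset $H \times U$. As $X$ is separated, the locus where $a = p$ is closed; being dense, it is all of $H \times X$, so $H$ acts trivially on $X$. Consequently the action map $T \times X \to X$ is invariant under right translation by $H$ in the $T$-factor, and hence descends to an algebraic action of $T' = T/H$ on $X$.

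Finally, under the identification $U \cong T'$ the induced $T'$-action on $U$ is exactly the multiplication of $T'$ on itself, so $T' \cong (k^*)^n$ is an open dense torus in $X$ whose self-multiplication extends to all of $X$; this is the definition of a toric variety. The main point to handle carefully is the descent of the action in the third step: one must justify, using separatedness of $X$ together with the density of $U$, that the triviality of $H$ on the open orbit propagates to the whole of $X$, so that the $T$-action genuinely factors through the torus $T'$. The identification of $T/H$ as a torus of the correct dimension is then routine lattice bookkeeping.
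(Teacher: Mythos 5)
Your proof is correct and follows essentially the same route as the paper's: fix $x_0 \in U$, identify $U$ with $T/H$ for $H$ the stabilizer, note that $T/H$ is again a torus, show $H$ acts trivially on all of $X$ by density, and let the quotient torus act. The only caveat is that your phrase ``by construction $H$ fixes every point of $U$'' silently uses commutativity of $T$ (for $y = t_y x_0$ and $h \in H$ one needs $hy = ht_yx_0 = t_yhx_0 = y$), a computation the paper spells out; conversely, your separatedness argument for propagating triviality from $U$ to $X$, and the descent of the action to $T/H$, make explicit two steps the paper leaves implicit.
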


\begin{proof} Let $x\in U$. Then we have $U=T/T_x$, where $T_x:=\{t\in T\, |\, tx=x\}$. Since $T$ is a torus, $U$ is again a torus.
For any $y\in U$ and $t\in T_x$, we have $y=t_y x$ for some $t_y\in T$.
Then $ty=tt_yx=t_ytx=y$. In particular, $T_x$ acts trivially on $U$ and hence on $X$. So the natural action of $U=T/T_x$ on itself extends to $X$.
\end{proof}

\begin{lemma}\label{logkod} Let $X$ be a normal projective variety and $U\subseteq X$ an open dense
subset such that $U$ is contained in the smooth locus of $X$. Let $D$ be the sum of all the prime divisors $D_i$ contained in $X \setminus U$. Assume $(X,D)$ is log canonical.
Let $\pi:Y\to X$ be a resolution such that $\pi$ is isomorphic over $U$ and
$Y \setminus \pi^{-1}(U)$ is an SNC divisor. Let $D'$ be the strict transform of $D$ and $E$ the sum of $\pi$-exceptional prime divisors $E_j$ such that $Y \setminus \pi^{-1}(U) = D' + E$.
Then the log Kodaira dimension $\bar{\kappa}(U) := \kappa(Y, D' + E)$ equals $\kappa(X,K_X+D)$, and
$K_Y + D' + E$ is pseudo-effective (i.e., the limit of effective $\R$-Cartier divisors) if and only if so is $K_X + D$.
\end{lemma}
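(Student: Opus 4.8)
The plan is to compare the two divisors $K_Y + D' + E$ on $Y$ and $\pi^*(K_X + D)$ on $X$ through a single discrepancy identity, and then transfer both the Iitaka dimension and the pseudo-effectivity between $Y$ and $X$ using standard properties of effective $\pi$-exceptional divisors. Everything reduces to one observation whose only non-formal input is the log canonicity hypothesis.

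First I would write down the log discrepancy relation for the resolution $\pi : Y \to X$ of the pair $(X,D)$. Since $D$ is reduced, the strict transform of each component carries discrepancy $-1$, so
$$K_Y + D' = \pi^*(K_X + D) + \sum_j a_j E_j, \qquad a_j := a(E_j; X, D).$$
Adding $E = \sum_j E_j$ gives
$$K_Y + D' + E = \pi^*(K_X + D) + F, \qquad F := \sum_j (a_j + 1) E_j.$$
The key point is that, because $(X, D)$ is log canonical, every discrepancy satisfies $a_j \ge -1$, so $F$ is an effective $\pi$-exceptional $\mathbb{Q}$-divisor. This identity is the crux and reduces both assertions to formal manipulations.

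For the equality of Iitaka dimensions, I would fix $m$ divisible by the Cartier index of $K_X + D$, so that $m(K_X + D)$ is Cartier and $mF$ is an integral effective $\pi$-exceptional divisor. Then, by the projection formula together with $\pi_* \mathcal{O}_Y(mF) = \mathcal{O}_X$ (valid since $mF \ge 0$ is $\pi$-exceptional and $X$ is normal),
$$H^0\big(Y, \mathcal{O}_Y(m(K_Y + D' + E))\big) \cong H^0\big(X, \mathcal{O}_X(m(K_X + D)) \otimes \pi_* \mathcal{O}_Y(mF)\big) \cong H^0\big(X, \mathcal{O}_X(m(K_X + D))\big).$$
As these isomorphisms hold for all such $m$, the Iitaka dimensions agree, that is $\bar{\kappa}(U) = \kappa(Y, K_Y + D' + E) = \kappa(X, K_X + D)$.

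For pseudo-effectivity I would argue in both directions using that a class is pseudo-effective precisely when it is a limit of effective classes. If $K_X + D$ is pseudo-effective then so is its pullback $\pi^*(K_X + D)$, and adding the effective $F$ keeps $K_Y + D' + E$ pseudo-effective; conversely, pushforward preserves limits of effective classes, and since $F$ and $E$ are $\pi$-exceptional we have $\pi_*(K_Y + D' + E) = K_X + D$, so pseudo-effectivity descends. I do not anticipate a serious obstacle: the only step genuinely using a hypothesis is the effectivity of $F$, which is exactly log canonicity of $(X, D)$, and one need only take the mild care that $K_X + D$ be $\mathbb{Q}$-Cartier (implicit in $(X,D)$ being a log canonical pair) so that $\pi^*(K_X + D)$ is defined and $mF$ is integral for suitable $m$.
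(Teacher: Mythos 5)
Your proposal is correct and follows essentially the same route as the paper: the discrepancy identity $K_Y + D' + E = \pi^*(K_X+D) + \sum_j b_j E_j$ with $b_j \ge 0$ coming from log canonicity, followed by the projection formula. You merely spell out the details (the choice of sufficiently divisible $m$ and the two directions of the pseudo-effectivity claim) that the paper's two-line proof leaves implicit.
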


\begin{proof}
Since $(X,D)$ is log canonical,
$K_Y+D'+E=\pi^\ast(K_X+D)+\sum\limits_{j} b_j E_j$ with $b_j\ge 0$.
Hence the lemma follows by the projection formula.
\end{proof}

\begin{proof}[Proof of Theorem \ref{thm-torus}]

Write $\deg f=q^{\dim(X)}$ with $q>1$. We may assume $G$ is connected and acts faithfully on $X$.
Let $x_0\in U$.
Then $U = Gx_0$ and $f(U) = f(Gx_0) = \varphi(G)f(x_0) = G f(x_0)$.
So $f(U)$ is an open dense $G$-orbit in $X$ and hence $f(U) = U$.
Further, we claim that $f^{-1}(U)=U$. Indeed,
for any $x\not\in U$, the orbit $Gx$ has $\dim (Gx) < \dim (X)$. Since
$Gf(x) = \varphi(G) f(x) = f(Gx)$, we have $\dim(Gf(x))=\dim(Gx)<\dim(X)$.
In particular, $f(x)$ is not in $U$.
So the claim is proved.
Let $D$ be the divisorial part of $X \setminus U$.
Then $f^{-1}(D) = D$.

Since $U$ is $G$-transitive,
$f|_U: f^{-1}(U) = U \to U$ is \'etale.
Thus, by the logarithmic ramification divisor formula,
we have $K_X + D = f^*(K_X + D)$.
Hence, $K_X+D\equiv 0$ by Lemma \ref{lem-eig}.
Since $K_X+D$ is $\Q$-Cartier, the pair $(X,D)$ is log canonical by \cite[Theorem 1.4]{BH}.
Let $\pi:Y\to X$ be a $G$-equivariant log resolution of $X$ such that $\pi|_{\pi^{-1}(U)}$ is isomorphic and $Y\backslash\pi^{-1}(U)$ is an SNC divisor (cf.~\cite{Ko}).
Applying Lemma \ref{logkod} and using the notation there, we have that $K_Y + D' + E$ is pseudo-effective.
Note that $G$ is linear, $\pi^{-1}(U)$ is an open dense $G$-orbit in $Y$, and $G$ acts faithfully on $Y$.
By \cite[Theorem 1.1]{BZ}, $G$ is an algebraic torus.
So $X$ is a toric variety by Lemma \ref{toric}. Since the big torus $U$ is an affine variety,
$X \setminus U$ is of pure codimension one and hence equal to $D$.
Thus $(X, D)$ is a toric pair.
\end{proof}

\section{The Complexity}\label{sec-comp}

\begin{setup}{\bf Some notation.}\label{setup2}
{\rm
Let $X$ be an $n$-dimensional normal projective variety and $D=\sum\limits_{i=1}^{\ell}D_i$ a reduced divisor on $X$. Denote by $\ell(D):=\ell$, the {\it number} of irreducible components in $D$;
and $r(D)$ the {\it rank} of the vector space spanned by $D_1,\cdots, D_\ell$ in the space of Weil $\R$-divisors modulo algebraic equivalence.

Let $(X,\Delta)$ be a log pair.
Write $\Delta = \sum_i \, a_i D_i$ with $a_i>0$ and $D_i$ distinct irreducible divisors.
Denote by
$$\langle\Delta\rangle:=\llcorner \Delta \lrcorner+\ulcorner 2\Delta\urcorner-\llcorner 2\Delta\lrcorner = \sum_{i: a_i > 1/2} \, D_i .$$
}
\end{setup}

\begin{definition}\label{def-comp}
A {\it decomposition} of $\Delta$ is an expression of the
form
$$\sum a_i S_i\le \Delta,$$
where $S_i \ge 0$ are $\Z$-divisors and $a_i \ge 0$, $1 \le i \le k$.
The {\it complexity} of this decomposition is $n+r-d$, where $r$ is the rank of the vector space
spanned by $S_1, S_2,\cdots, S_k$ in the space of Weil $\R$-divisors modulo algebraic
equivalence and $d=\sum a_i$.
The {\it complexity} $c = c(X,\Delta)$ of $(X,\Delta)$ is the infimum of the complexity of any decomposition of $\Delta$.
\end{definition}

The following theorem gives a geometric characterization of toric varieties involving the complexity by Brown, ${\rm M}^c$Kernan, Svaldi and Zong.

\begin{theorem} (cf.~\cite[Theorem 1.2]{BMSZ}) \label{thm-bmsz}
Let $X$ be a proper variety of dimension $n$ and
let $(X,\Delta)$ be a log canonical pair such that $-(K_X + \Delta)$ is nef.
If $\sum a_iS_i$ is a decomposition of complexity $c$ less than one then there is a divisor $D$ such that $(X,D)$ is a toric pair, where $D \ge \langle\Delta\rangle$ and all
but $\llcorner 2c \lrcorner$ components of $D$ are elements of the set $\{S_i\, |\, 1 \le i \le k \}$.
\end{theorem}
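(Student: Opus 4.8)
The plan is to argue by induction on $n=\dim X$, combining the minimal model program with a fibration analysis of the complexity and treating the characterization as a rigidity statement around the extremal value of Shokurov's inequality (cf.~\cite{Sho}). The first thing I would pin down is the \emph{lower bound}: for any log canonical $(X,\Delta)$ with $-(K_X+\Delta)$ nef, every decomposition has nonnegative complexity, i.e.\ $d=\sum a_i\le n+r$. Granting this, the hypothesis $c<1$ places $c$ in $[0,1)$, so $\llcorner 2c\lrcorner\in\{0,1\}$ and there is at most one ``missing'' boundary component to account for. I would establish the bound by first replacing the $S_i$ with distinct prime components (which can only decrease $d-r$, hence the complexity), and then either induct on dimension through the MMP or, in low dimension, argue directly: after intersecting with a general complete-intersection curve, the number of independent effective components that can coexist while $-(K_X+\Delta)$ stays nef is controlled by a Hodge-index/negativity estimate.

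For the inductive step I would reduce to a $\Q$-factorial dlt pair by passing to a dlt modification, checking that the complexity does not increase along it and that it suffices to prove the model toric (the birational morphism then being toric lets me descend the structure to $X$); after a small perturbation I may take $\Delta=D$ reduced with $K_X+D\equiv 0$, a log Calabi--Yau pair. Running the MMP, each divisorial contraction lowers $n+r$ and the number of boundary components compatibly, so the relevant inequalities are preserved, and the program terminates in a Mori fibre space $\pi:X'\to Z$. The crux is then a \emph{superadditivity estimate for the complexity under fibrations}, of the shape $c(X',D)\ge c(F,D_F)+c(Z,D_Z)$ for a general fibre $F$ with its restricted boundary and the base $Z$ with its horizontal boundary; combined with the lower bound $c\ge 0$ this forces both $c(F,D_F)$ and $c(Z,D_Z)$ below one, so the induction hypothesis makes both $F$ and $Z$ toric.

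The main obstacle, and the genuine content of the theorem, is assembling the fibrewise and base toric structures into a global torus action on $X'$ and then reading off the boundary. Knowing that $F$ and $Z$ are toric and that $D$ has the expected number of components does not by itself produce a dense $(k^*)^n$-orbit: one must glue the fibre tori over the big torus of $Z$ compatibly, which requires controlling the monodromy of $\pi$ and the way the components $S_i$ specialise to fibres. I would organise this by trivialising $\pi$ \'etale-locally over the big torus of $Z$, building toric coordinates fibre-by-fibre, and then extending the action across the boundary, using the log Calabi--Yau condition $K_X+D\equiv 0$ (already forced by $-(K_X+\Delta)$ nef together with $c\ge 0$ being near-extremal) to rule out unexpected ramification that would obstruct the extension.

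Finally, the refined conclusions are a matter of coefficient bookkeeping through the MMP. The divisors entering $\langle\Delta\rangle$ are exactly those whose coefficient exceeds $1/2$, and such components are never contracted in the steps above, so they survive as prime components of the toric boundary $D$, giving $D\ge\langle\Delta\rangle$. The components of $D$ that fail to lie in $\{S_i\mid 1\le i\le k\}$ arise only from the accumulated ``defect'' between $d$ and $n+r$ recorded at each contraction and fibration, and tallying this defect across the induction bounds their number by $\llcorner 2c\lrcorner$, as claimed. Throughout, the delicate points I expect to require the most care are the fibration superadditivity estimate and the gluing of tori; once those are in place the rest follows the schema of \cite{BMSZ}.
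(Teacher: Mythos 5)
Your proposal cannot be compared against an internal argument, because the paper contains none: Theorem \ref{thm-bmsz} is quoted verbatim from \cite{BMSZ} (it is their Theorem 1.2), and the tag ``(cf.~\cite{BMSZ})'' marks a citation, not a proof. Meng and Zhang use it as a black box, feeding it the hypotheses produced by Theorem \ref{thm-pitrivial} via Remark \ref{rmk-comp}. So your text has to stand on its own as a reproof of that external theorem, and it does not: it is an outline whose three load-bearing steps are exactly the content of the (long and delicate) argument in \cite{BMSZ}, and all three are deferred rather than carried out.

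Concretely: (i) the nonnegativity $c\ge 0$ for log canonical pairs with $-(K_X+\Delta)$ nef is itself one of the theorems of \cite{BMSZ}; ``a Hodge-index/negativity estimate'' is not an argument, and your auxiliary reduction ``after a small perturbation take $K_X+\Delta\equiv 0$'' is also unjustified, since a nef class need not be numerically equivalent to an effective one, so $-(K_X+\Delta)$ cannot in general be absorbed into the boundary. (ii) Your fibration inequality $c(X')\ge c(F,D_F)+c(Z,D_Z)$ is not even well-posed as written: horizontal components of the boundary dominate $Z$ and induce no divisor there, so ``the base with its horizontal boundary'' must be replaced by the images of vertical components together with a discriminant-type divisor, and proving superadditivity for that datum is a core technical step, not a lemma one may cite. (iii) Most importantly, the gluing step you yourself call ``the main obstacle'' \emph{is} the theorem. ``Fibre toric and base toric'' does not imply the total space is toric: for a conic bundle whose boundary contains an irreducible bisection, every general fibre pair is $(\mathbb{P}^1,\{0,\infty\})$ and the base pair can be toric, yet the monodromy exchanging the two branches of the bisection precludes any global torus action. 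Such examples necessarily have complexity $\ge 1$, consistent with the statement --- but that is precisely the problem: your induction hands you only the toricness of fibre and base, which is compatible with non-toric total spaces, so the complexity hypothesis must be threaded through the gluing itself, by tracking the number and the rank of the irreducible horizontal components, the defect $\llcorner 2c\lrcorner$, and which components of $\langle\Delta\rangle$ survive each MMP step (your assertion that components of coefficient $>1/2$ are ``never contracted'' is likewise unproved). That bookkeeping through the contractions and the Mori fibration is what \cite{BMSZ} actually do, and it is absent from your proposal.
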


\begin{remark}\label{rmk-comp}
(1) If the $\Delta$ in Theorem \ref{thm-bmsz} is a reduced divisor with
the complexity $n+r(\Delta)-\ell(\Delta)\le 0$,
then Theorem \ref{thm-bmsz} implies that $(X,\Delta)$ itself is a toric pair.

(2) Let $f:X\to X$ be a polarized endomorphism and let $D$ be an $f^{-1}$-invariant reduced divisor
such that $f|_{X\backslash D}$ is quasi-\'etale and $K_X+D$ is $\Q$-Cartier. Then $K_X+D \equiv 0$ by the logarithmic ramification divisor formula and Lemma \ref{lem-eig}.
In particular, $-(K_X+D)$ is nef; further, the pair $(X, D)$ is log canonical (cf.~\cite[Theorem 1.4]{BH}).
\end{remark}

The following theorem provides us with a useful upper bound of the complexity.

\begin{theorem}\label{thm-complexity}
Let $X$ be a normal projective variety and $D$ a reduced divisor of $X$. Then,
in notation of \ref{nat} and \ref{setup2}, 
$$\ell(D)\ge h^0(X,\hat{\Omega}_X^1(\log D))+r(D)-\widetilde{q}(X) .$$
In particular, 
$$c(X,D)\le n+\widetilde{q}(X)-h^0(X,\hat{\Omega}_X^1(\log D)) .$$
\end{theorem}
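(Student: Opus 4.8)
The plan is to prove the key inequality
$$\ell(D)\ge h^0(X,\hat{\Omega}_X^1(\log D))+r(D)-\widetilde{q}(X),$$
from which the complexity bound follows immediately: by Definition \ref{def-comp}, the reduced decomposition $\sum_{i=1}^{\ell} 1\cdot D_i = D$ has complexity $n+r(D)-\ell(D)$, and so $c(X,D)\le n+r(D)-\ell(D)\le n+\widetilde{q}(X)-h^0(X,\hat{\Omega}_X^1(\log D))$ once the displayed inequality is in hand. So the entire content is the first inequality, which I would rewrite as
$$h^0(X,\hat{\Omega}_X^1(\log D))\le \ell(D)-r(D)+\widetilde{q}(X).$$

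First I would set up the residue/pole-order filtration on logarithmic $1$-forms. Working on the big open set $U\subseteq X$ where $\Codim(X\setminus U)\ge 2$ and $D\cap U$ is normal crossing, there is the standard short exact sequence
$$0\to \Omega_U^1\to \Omega_U^1(\log D\cap U)\xrightarrow{\ \mathrm{res}\ } \bigoplus_i \mathcal{O}_{D_i\cap U}\to 0,$$
where the residue map sends a log form to its residues along the components $D_i$. Pushing forward by $j_*$ and using $\Codim(X\setminus U)\ge 2$ to identify global sections on $U$ with global sections on $X$ for reflexive sheaves, I get a left-exact sequence of global sections. Taking $H^0$ gives
$$0\to H^0(X,\hat{\Omega}_X^1)\to H^0(X,\hat{\Omega}_X^1(\log D))\xrightarrow{\ \mathrm{res}\ } \bigoplus_{i=1}^{\ell}\, H^0(D_i\cap U,\mathcal{O}),$$
and since each $D_i\cap U$ is connected (being the smooth part of an irreducible divisor), each summand on the right contributes at most a one-dimensional space of constants, so the image of the residue map has dimension at most $\ell=\ell(D)$. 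This yields
$$h^0(X,\hat{\Omega}_X^1(\log D))\le h^0(X,\hat{\Omega}_X^1)+\ell(D).$$

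Next I would bound $h^0(X,\hat{\Omega}_X^1)$. Reflexive global $1$-forms on $X$ pull back to (restrict to) honest global $1$-forms on the smooth locus, hence to a smooth projective model $\widetilde{X}$, where $h^0(\widetilde{X},\Omega^1_{\widetilde{X}})=\widetilde{q}(X)$ by Hodge theory. But this naive bound gives $\ell(D)+\widetilde{q}(X)$ on the right, which is too weak by $r(D)$; the subtlety is that I must gain the $-r(D)$. The fix is to look at the residue map more carefully: a log form with \emph{all} residues equal to a prescribed tuple $(c_1,\dots,c_\ell)$ of constants exists only when the corresponding combination $\sum c_i[D_i]$ is trivial in an appropriate cohomology group (the residues must satisfy the relations imposed by the classes of the $D_i$). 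Concretely, the cokernel obstruction lives in $H^1$ and the image of the residue map is exactly the kernel of the map $\bigoplus_i \R\to \mathrm{NS}$-type space sending $(c_i)\mapsto \sum c_i[D_i]$, whose image has rank $r(D)$; hence the residues that are actually achieved span a space of dimension at most $\ell(D)-r(D)$, not $\ell(D)$. Combining with the Hodge-theoretic bound $h^0(X,\hat{\Omega}_X^1)\le \widetilde{q}(X)$ then gives the desired
$$h^0(X,\hat{\Omega}_X^1(\log D))\le \widetilde{q}(X)+\big(\ell(D)-r(D)\big),$$
which is the claim.

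The main obstacle is precisely this last gain of $-r(D)$, i.e.\ showing that the residues realized by \emph{global} logarithmic forms are constrained by the relations among the divisor classes $[D_i]$ modulo algebraic equivalence. The clean way to see it is through the first Chern class / residue compatibility: the residue along $D_i$ of a closed global log form records (a multiple of) the class $[D_i]$, and a linear relation $\sum c_i [D_i]\equiv 0$ is exactly what is needed to patch the local constant residues into a global form, so the space of achievable residue tuples is the space of relations, of dimension $\ell(D)-r(D)$. I would need to handle the passage to $\widetilde{X}$ and the identification of $r(D)$ (rank modulo algebraic equivalence) with the rank of the span of the $[D_i]$ carefully, and verify that the reflexive pushforward does not introduce extra sections beyond those detected on a smooth model; this comparison between $X$ and $\widetilde{X}$, together with the residue-relation dimension count, is where the real work lies.
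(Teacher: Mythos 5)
Your overall skeleton is the same as the paper's: reduce to the inequality $\ell(D)\ge h^0(X,\hat{\Omega}_X^1(\log D))+r(D)-\widetilde{q}(X)$, then combine the residue exact sequence with a Chern-class constraint on residues and an irregularity bound on the kernel. But you run this argument directly on the big open set $U\subseteq X_{\reg}$, and the two steps you yourself flag as ``where the real work lies'' are genuine gaps; in fact both claims are false in the generality in which you assert them. First, the bound $h^0(X,\hat{\Omega}_X^1)\le\widetilde{q}(X)$ is not Hodge theory: a reflexive $1$-form is only a form on $X_{\reg}$, and the assertion that it extends without poles to a smooth projective model is an extension theorem that fails for general normal varieties. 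Concretely, let $X\subset\mathbb{P}^3$ be the projective cone over a smooth plane quartic $Z$ (a canonically embedded genus-$3$ curve). Then $X_{\reg}$ is isomorphic to the total space of $K_Z$, i.e.\ to $T^*Z$, which carries the tautological (Liouville) $1$-form in addition to the pullbacks of the three forms from $Z$; hence $h^0(X,\hat{\Omega}_X^1)\ge 4$, while $\widetilde{q}(X)=q(\mathbb{P}(\mathcal{O}_Z\oplus K_Z^{-1}))=3$. This is exactly the $D=0$ case of your claimed inequality, so no soft argument can establish it; what saves the statement in the paper's applications is that $(X,D)$ there is log canonical (via \cite[Theorem 1.4]{BH}), which is what makes \cite[Theorem 1.5]{GKKP} applicable. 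Second, your count of achievable residue tuples needs the connecting homomorphism $\bigoplus_i H^0(\mathcal{O}_{D_i\cap U})\to H^1(U,\Omega_U^1)$ to be the first-Chern-class map whose kernel is the space of relations modulo algebraic equivalence. That identification (Lefschetz $(1,1)$/Hodge theory) is available only on a smooth \emph{projective} variety; on the non-compact $U$, or on the singular $X$, there is no such statement, and the kernel of the connecting map has no a priori relation to $r(D)$.

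The paper's proof performs the identical count, but on a log resolution $\pi:\widetilde{X}\to X$, where both steps are honest: there one has the equality $\ell(\widetilde{D})=h^0(\widetilde{X},\Omega_{\widetilde{X}}^1(\log\widetilde{D}))+r(\widetilde{D})-q(\widetilde{X})$, with $\widetilde{D}=D'+E$ crucially containing the $\pi$-exceptional divisors over the non-klt locus, so that sections downstairs are permitted log poles along $E$ upstairs. The descent to $X$ then uses precisely the two inputs missing from your proposal: reflexivity of $\pi_*\Omega_{\widetilde{X}}^1(\log\widetilde{D})$ (\cite[Theorem 1.5]{GKKP}), which identifies $H^0(\widetilde{X},\Omega_{\widetilde{X}}^1(\log\widetilde{D}))$ with $H^0(X,\hat{\Omega}_X^1(\log D))$, and the negativity lemma (\cite[Lemma 3.6.2]{BCHM}), which gives $r(\widetilde{D})\ge r(D)+\ell(E)$ so that the exceptional components cancel from the count. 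Your plan of bounding $h^0(X,\hat{\Omega}_X^1)$ and the residue image separately on $X$ itself cannot be repaired without reintroducing this machinery, because the sections you must control are exactly those that acquire poles along $E$ on any resolution.
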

\begin{proof} Let $\pi:\widetilde{X}\to X$ be a log resolution of the pair $(X,D)$ with $E$ being the reduced $\pi$-exceptional divisor.
Denote by $\widetilde{D}$ the largest reduced divisor contained in $\Supp \pi^{-1}(\text{non-klt locus}$ of $(X, D)$).
Let $D'$ be the strict transform of $D$ and $\widetilde{D}_i$ the irreducible component of $\tilde{D}$.
Then we may write $\widetilde{D}=\sum_i \widetilde{D}_i=D'+E$.

From the exact sequence
$$0\to \Omega_{\widetilde{X}}^1\to \Omega_{\widetilde{X}}^1(\log \widetilde{D})\to \oplus_i \mathcal{O}_{\widetilde{D}_i}\to 0,$$
we get
$$0\to H^0(\widetilde{X},\Omega_{\widetilde{X}}^1)\to H^0(\widetilde{X},\Omega_{\widetilde{X}}^1(\log \widetilde{D}))\to \oplus_i H^0(\widetilde{D}_i,\mathcal{O}_{\widetilde{D}_i})\to H^1(\widetilde{X},\Omega_{\widetilde{X}}^1),$$
where the connecting homomorphism essentially sends a generator 1 of $\mathcal{O}_{\widetilde{D}_i}$ for each component $\widetilde{D}_i$ to the first Chern class $c_1(\widetilde{D}_i)$.
So $\ell(\widetilde{D})=h^0(\widetilde{X},\Omega_{\widetilde{X}}^1(\log \widetilde{D}))+r(\widetilde{D})-q(\widetilde{X})$.

By \cite[Theorem 1.5]{GKKP}, $\mathcal{F}:=\pi_*\Omega_{\widetilde{X}}^1(\log \widetilde{D})$ is reflexive. Note that $\mathcal{F}|_U=\Omega_U^1(\log D\cap U)$, where $U\subseteq X_{\reg}$ such that $\Codim(X\backslash U)\ge 2$ and $D\cap U$ is a smooth divisor.
Then we have $H^0(\widetilde{X},\Omega_{\widetilde{X}}^1(\log \widetilde{D}))=H^0(X,\mathcal{F})=H^0(U,\mathcal{F}|_U)=H^0(X,\hat{\Omega}_X^1(\log D))$.

By the negativity lemma (cf.~\cite[Lemma 3.6.2]{BCHM}), $r(E)=\ell(E)$ and hence $r(D)\le r(\widetilde{D})-\ell(E)$. So the theorem is proved.
\end{proof}

\begin{remark}\label{rmk-comp2} In Theorem \ref{thm-complexity}, if $X$ is assumed to be $\Q$-factorial, then the negativity lemma implies $r(D)= r(\widetilde{D})-\ell(E)$ at the end of the proof. In particular, we will have $\ell(D)= h^0(X,\hat{\Omega}_X^1(\log D))+r(D)-\widetilde{q}(X)$ and $c(X,D)= n+\widetilde{q}(X)-h^0(X,\hat{\Omega}_X^1(\log D))$.

If $(X,D)$ is assumed to be a normal projective toric pair,
then it is known that $\hat{\Omega}_X^1(\log D)$ is free; see \cite[4.3, page 87]{Ful}. Since $X$ is rationally connected, $\widetilde{q}(X)=0$. Therefore, $\ell(D)\ge n+r(D)$
(with equality holding true when $X$ is $\Q$-factorial).
\end{remark}

\section{Proof of Theorem \ref{thm-pitrivial}}

\begin{lemma}\label{uqc} Let $X$ be a normal projective variety with
finite algebraic fundamental group $\pi_1^{\alg}(X_{\reg})$.
Then $X$ admits a universal quasi-\'etale cover $\pi:\widetilde{X}\to X$, such that $\pi_1^{\alg}(\widetilde{X}_{\reg})$ is trivial and any surjective endomorphism $f$ of $X$ lifts to $\widetilde{X}$.
\end{lemma}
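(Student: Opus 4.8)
The plan is to construct $\widetilde{X}$ as the variety associated to the universal object in the category of quasi-\'etale covers of $X$, and then to show that $f$ lifts by a uniqueness/universality argument. First I would recall that quasi-\'etale covers of $X$ correspond, via restriction to $X_{\reg}$, to finite \'etale covers of $X_{\reg}$ together with taking normalization in the function field; this is the standard dictionary allowing us to pass between the topological/algebraic fundamental group of the smooth locus and actual covers of $X$. Since $\pi_1^{\alg}(X_{\reg})$ is assumed finite, there is a maximal finite \'etale cover $V \to X_{\reg}$ with $V$ connected and $\pi_1^{\alg}(V)$ trivial; I would then let $\pi : \widetilde{X} \to X$ be the normalization of $X$ in the function field $k(V)$. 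By construction $\widetilde{X}_{\reg}$ contains $V$ as an open set of complement codimension $\ge 2$, so $\pi_1^{\alg}(\widetilde{X}_{\reg})$ is trivial, and $\pi$ is \'etale over $X_{\reg}$ minus the branch locus, which sits in codimension $\ge 2$; hence $\pi$ is quasi-\'etale.

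The second and main part is the lifting of $f$. The natural approach is to consider the fiber product construction: form $\widetilde{X} \times_{f, X, \pi} X$, or more directly consider the composite $\widetilde{X} \xrightarrow{\pi} X \xrightarrow{f} X$ and ask for a factorization through $\pi$. I would argue as follows. The morphism $f \circ \pi : \widetilde{X} \to X$, restricted over $X_{\reg}$, pulls back the \'etale cover $V \to X_{\reg}$ to a finite \'etale cover of an open dense subset of $\widetilde{X}_{\reg}$; because $\pi_1^{\alg}(\widetilde{X}_{\reg})$ is trivial, this pulled-back cover is split, i.e.\ it has a section. This section is exactly the data needed to produce a morphism $\widetilde{f} : \widetilde{X} \to \widetilde{X}$ over $f$ on the level of smooth loci, and by normality it extends to all of $\widetilde{X}$, giving $\pi \circ \widetilde{f} = f \circ \pi$. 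Here one uses that $\widetilde{X}$ is normal and $\widetilde{X} \to X$ is finite, so a rational map agreeing with a morphism on a big open set extends.

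The step I expect to be the main obstacle is verifying that the pullback $f^* V$ (as an \'etale cover of the relevant open subset of $X_{\reg}$) actually sits inside the category classified by $\pi_1^{\alg}(X_{\reg})$ in a way compatible with the universal cover, i.e.\ that $f$ induces a well-defined map on algebraic fundamental groups sending the maximal cover to itself. The subtlety is that $f$ need not be \'etale, so $f^{-1}(X_{\reg})$ may be larger or smaller than $X_{\reg}$ and the branch locus of $f$ must be controlled; one needs $f$ to be \'etale in codimension $1$ over the relevant locus, or to absorb the ramification into the codimension-$2$ exceptional set so that the induced homomorphism $f_* : \pi_1^{\alg}(\widetilde{X}_{\reg}) \to \pi_1^{\alg}(X_{\reg})$ is defined. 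Since $\pi_1^{\alg}(\widetilde{X}_{\reg})$ is trivial this homomorphism is automatically trivial, which is precisely what forces the pulled-back cover to split; but setting up this functoriality carefully for a merely finite (not \'etale) $f$, and checking the codimension bookkeeping, is the delicate point. Once the splitting is in hand, the extension of $\widetilde{f}$ across the codimension-$2$ locus and the verification that $\widetilde{f}$ is again a surjective endomorphism (and polarized, by pulling back the polarization along $\pi$) are routine.
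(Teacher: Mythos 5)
Your proposal is correct and is essentially the paper's own argument: the paper forms the normalization $W$ of the fibre product of $f$ and $\pi$, takes a dominant irreducible component $W_0$ (which is quasi-\'etale over $X$ precisely because $f$ is finite, so all ramification lies over codimension $\ge 2$), and identifies the universal quasi-\'etale cover of $W_0$ with $\widetilde{X}$ itself --- which is exactly the Galois-theoretic content of your step that $(f\circ\pi)^*V$ splits because $\pi_1^{\alg}(\widetilde{X}_{\reg})$ is trivial. The only difference is packaging: universality of $\widetilde{X}$ over $W_0$ hands the paper a global morphism $\widetilde{X}\to W_0\to\widetilde{X}$ at once, whereas you split the pulled-back cover over a big open subset and then extend the section across codimension two using normality and finiteness of $\pi$.
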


\begin{proof} Since $\pi_1^{\alg}(X_{\reg})$ is finite, there is a universal quasi-\'etale cover $\pi:\widetilde{X}\to X$ such that $\pi_1^{\alg}(\widetilde{X}_{\reg})$ is trivial.
Let $W$ be the normalization of the fibre product of $f$ and $\pi$ and $W_0$ a dominant irreducible component of $W$. Then $W_0\to X$ is also a quasi-\'etale cover. Taking the universal quasi-\'etale cover of $W_0$ which is $\widetilde{X}$, we are done.
\end{proof}

The same argument of \cite[Proposition 2.4]{HN} gives the following.

\begin{proposition}\label{codim3-prop} Let $X$ be a normal projective variety smooth in codimension $2$ and $D\subset X$ a reduced divisor.
Suppose $f:X\to X$ is a polarized endomorphism such that $f^{-1}(D)=D$ and $f|_{X\backslash D}$ is quasi-\'etale. Then there is a smooth open subset $U\subseteq X$ such that $D \cap U$ is a normal crossing divisor and $\Codim(X \backslash U) \ge 3$.
In particular, $\hat{\Omega}_X^1(\log D)$ is locally free over $U$.
\end{proposition}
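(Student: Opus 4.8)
The plan is to follow the argument of \cite[Proposition 2.4]{HN}, reducing the entire statement to a purely local question at codimension-$2$ points and then exploiting the finite self-cover $f$ there. First observe that the final ("In particular") assertion is immediate from the rest: once we produce a smooth open $U\subseteq X$ on which $D\cap U$ is a normal crossing divisor, the restriction $\hat{\Omega}_X^1(\log D)|_U$ coincides with $\Omega_U^1(\log D\cap U)$, which is locally free by the very construction in \ref{nat}. So it suffices to exhibit such a $U$ with $\Codim(X\setminus U)\ge 3$. The natural candidate is
$$U := X_{\reg}\setminus\{P\in X_{\reg}\ :\ D \text{ is not normal crossing at } P\}.$$
Since $X$ is smooth in codimension $2$, the locus $\Sing(X)$ already has codimension $\ge 3$; hence it is enough to prove that $D$ is normal crossing at every codimension-$2$ point $P$ of $X$ (all such points lie in $X_{\reg}$).

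Next I would fix a codimension-$2$ point $P$ of $X$ and study $f$ locally. Because $f$ is finite surjective, every point of $f^{-1}(P)$ is again a codimension-$2$ point of $X$, so $X$ is smooth at each preimage $P'$ as well. Thus, over a small (\'etale or analytic) neighbourhood $V$ of $P$, the morphism $f$ restricts to a finite map $f^{-1}(V)\to V$ between germs that are smooth at the relevant points; by miracle flatness this map is flat there, and it is \'etale outside $D$ by the quasi-\'etale hypothesis. I would then invoke Zariski--Nagata purity of the branch locus (the ramification is automatically tame since $\operatorname{char}k=0$): the branch locus is a divisor contained in $D$, and then Abhyankar's lemma applies. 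The point I want is the converse direction, namely that a \emph{smooth} finite cover of a smooth germ, \'etale outside the reduced divisor $D$ and genuinely branched along it, can only be branched along a normal crossing divisor. Concretely, if $D$ had a tangency, a cuspidal branch, or three or more branches at $P$, then a cover branched along $D$ would acquire a singular point over $P$ (as one checks directly, e.g.\ the double cover $z^2=y^2-x^3$ of a cuspidal branch is singular at the origin), contradicting the smoothness of $f^{-1}(V)$ at $P'$. This is the step that forces $D$ to be normal crossing at $P$ in the ramified case.

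The delicate point, which I expect to be the main obstacle, is that $f$ need not ramify over every branch of $D$ through $P$: along a branch with ramification index $1$ the map is \'etale, and the Abhyankar argument then has no divisor to bite on. To handle this I would argue by invariance and dynamics. If $f$ is \'etale at $P$, then \'etale-locally the germ of $D$ at $P$ is isomorphic to the germ of $D$ at $f(P)$, so the (closed) non-normal-crossing locus is carried into itself by $f$ on its \'etale locus; as there are only finitely many codimension-$2$ irreducible components of this locus, some iterate of $f$ stabilizes each of them. Combining this invariance with the polarized hypothesis---so that $\deg f^{m}=q^{nm}\to\infty$, and recalling that, were $f$ \'etale along all of $D$ in codimension $1$, Lemma \ref{lem-eig} would give $K_X\equiv 0$---I would propagate the analysis to a point or an iterate $f^m$ at which genuine ramification along the offending branch occurs, and then apply the previous paragraph. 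Making this dichotomy between ramified and unramified branches precise, and verifying the local "smooth-cover-forces-normal-crossing" statement in all the relevant configurations (tangencies, cusps, higher-multiplicity branches), is the technical heart of the proof.

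Finally, granting the codimension-$2$ claim, the set $X\setminus U=\Sing(X)\cup\{P:D\text{ not normal crossing at }P\}$ has codimension $\ge 3$, and over $U$ the pair $(U,D\cap U)$ is log smooth. Hence $\hat{\Omega}_X^1(\log D)$ is locally free on $U$, as asserted.
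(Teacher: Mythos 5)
Your reduction to codimension-$2$ points, and the flatness/purity remarks, are fine, but the two steps that carry the actual content are respectively false and missing --- and they are exactly the places where the argument the paper invokes (that of \cite[Proposition 2.4]{HN}; the paper gives no independent proof) does something different. First, your local claim that a finite cover of a smooth surface germ by a smooth germ, \'etale outside a reduced curve $C$ and branched along it, forces $C$ to be normal crossing, is simply not true. Let $S_3$ act on $k^2$ by its reflection representation (the Weyl group of $A_2$); by Chevalley the quotient is again smooth, $k^2 \to k^2/S_3 \cong k^2$ is a degree-$6$ finite map between smooth germs, \'etale exactly outside the discriminant, and the discriminant is a cuspidal cubic --- not normal crossing. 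Checking the one cyclic double cover $z^2=y^2-x^3$ rules out nothing. What rescues the statement here is the hypothesis $f^{-1}(D)=D$, which your local analysis discards: at the generic point of an $f$-invariant codimension-$2$ component of the non-normal-crossing locus, the source and target curve germs are germs of the \emph{same} divisor along the \emph{same} component (in the $S_3$ example, by contrast, the preimage of the cusp is three concurrent lines). Once one also knows that the ramification index along every branch is some $q\ge 2$, the log-\'etale relation $K_{S'}+C'=\varphi^*(K_S+C)$ gives, for the log canonical threshold at that point, $\operatorname{lct}=1+(\operatorname{lct}-1)/q$, hence $\operatorname{lct}=1$, i.e.\ the pair is log canonical there; and the only reduced plane curve germs $C$ with $(k^2,C)$ log canonical are the normal crossing ones. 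That classification (or the equivalent local $\pi_1$ argument), not smoothness of the cover, is what forces normal crossings.

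Second, the ``delicate point'' you flag --- branches along which $f$ is unramified --- is real, but your proposed fix is not a proof: the dichotomy ``were $f$ \'etale along all of $D$ in codimension $1$'' is not the relevant one ($f$ can ramify along some components and not others), and $K_X\equiv 0$ is not by itself a contradiction. The polarization hypothesis eliminates this case by a one-line intersection computation, which is precisely where ``polarized'' enters the proposition. Replace $f$ by an iterate so that every irreducible component $D_i$ of $D$ satisfies $f^{-1}(D_i)=D_i$ (possible since $f$ permutes the finitely many components), and write $f^*D_i=r_iD_i$. Then
\begin{equation*}
r_i\,q^{n-1}\,(D_i\cdot H^{n-1})\;=\;(f^*D_i)\cdot(f^*H)^{n-1}\;=\;(\deg f)\,(D_i\cdot H^{n-1})\;=\;q^{n}\,(D_i\cdot H^{n-1}),
\end{equation*}
so $r_i=q\ge 2$ for \emph{every} $i$, and $f^N$ has ramification index $q^N$ along every branch of $D$. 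This removes your unramified case entirely and supplies the ``$q\ge 2$ along every branch'' input needed by the local argument above. Without these two corrections --- the intersection-theoretic forcing of ramification, and the correct local statement using total invariance plus log canonicity --- the proposal does not prove the proposition.
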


The following slightly extends \cite[Propositions 2.2 and 2.3]{HN}.

\begin{proposition}\label{mix-prop} Let $X$ be a normal projective variety
which is of dimension $n \ge 2$ and smooth in codimension $2$, and $D\subset X$ a reduced divisor.
Suppose $f:X\to X$ is a polarized endomorphism such that $f^{-1}(D)=D$ and $f|_{X\backslash D}$ is quasi-\'etale.
Let $H$ be an ample divisor on $X$ such that $f^*H\sim qH$ for some $q>1$. Then the following hold.
\begin{itemize}
\item[(1)] The intersection numbers vanish: $c_1(\hat{\Omega}_X^1(\log D))\cdot H^{n-1}= c_1(\hat{\Omega}_X^1(\log D))^2\cdot H^{n-2}= c_2(\hat{\Omega}_X^1(\log D))\cdot H^{n-2}= 0$.
\item[(2)] The sheaf $\hat{\Omega}_X^1(\log D)$ is reflexive and $H$-slope semistable.
\end{itemize}
\end{proposition}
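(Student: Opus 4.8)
```latex
\textbf{Proof strategy for Proposition \ref{mix-prop}.}

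The plan is to exploit the pullback relation $f^*H \sim qH$ together with the quasi-\'etale hypothesis to show that the Chern numbers of $\hat{\Omega}_X^1(\log D)$ are forced to vanish, and then to feed this into the standard Bogomolov-type inequality to obtain slope semistability. First I would fix a reflexive coherent sheaf $\SF := \hat{\Omega}_X^1(\log D)$ of rank $n$ and work on the smooth open subset $U \subseteq X$ supplied by Proposition \ref{codim3-prop}, over which $\SF$ is locally free and $\Codim(X \setminus U) \ge 3$. Because the complement of $U$ has codimension at least $3$, intersection numbers of the form $c_1(\SF)\cdot H^{n-1}$, $c_1(\SF)^2\cdot H^{n-2}$ and $c_2(\SF)\cdot H^{n-2}$ are well defined and can be computed on $U$ (or on a resolution that is isomorphic over $U$), so the singularities do not contribute.

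The key computation is to compare Chern classes under pullback. Since $f^{-1}(D)=D$ and $f|_{X\setminus D}$ is quasi-\'etale, the logarithmic differentials pull back naturally, giving an isomorphism $f^{[*]}\SF \isom \SF$ over $U$ (reflexive pullback), at least after restricting to the locus where $f$ is \'etale; here the codimension-$3$ control is exactly what lets one ignore the discrepancy locus. Consequently $f^* c_1(\SF) \equiv c_1(\SF)$ and $f^* c_2(\SF) \equiv c_2(\SF)$ as numerical classes. Now I would extract the vanishing in two steps. For the first-Chern-class statement, note $f^* c_1(\SF) \equiv c_1(\SF)$ makes $c_1(\SF)$ an eigenclass, so Lemma \ref{lem-eig} forces $c_1(\SF) \equiv 0$; this immediately yields $c_1(\SF)\cdot H^{n-1}=0$ and $c_1(\SF)^2\cdot H^{n-2}=0$. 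For the second Chern number, I would use the multiplicativity of degree under the finite map: on one hand $f^*(c_2(\SF)\cdot H^{n-2}) = (\deg f)\, c_2(\SF)\cdot H^{n-2}$ by the projection formula, while on the other hand $f^* c_2(\SF) \equiv c_2(\SF)$ and $f^*H^{n-2} \equiv q^{n-2}H^{n-2}$, so $f^*(c_2(\SF)\cdot H^{n-2}) \equiv q^{n-2}\, c_2(\SF)\cdot H^{n-2}$. Since $\deg f = q^n \neq q^{n-2}$, comparing the two expressions forces $c_2(\SF)\cdot H^{n-2}=0$. This establishes part (1).

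For part (2), I would invoke the Bogomolov--Gieseker inequality for semistable reflexive sheaves (in the klt/mild-singularity setting provided by the surrounding hypotheses and references such as \cite{GKKP}): the discriminant $\Delta(\SF)\cdot H^{n-2} = \big(2n\, c_2(\SF) - (n-1)c_1(\SF)^2\big)\cdot H^{n-2}$ is nonnegative whenever $\SF$ is $H$-slope semistable, and vanishing of this discriminant is the extremal case. Having shown in part (1) that both $c_1(\SF)^2\cdot H^{n-2}$ and $c_2(\SF)\cdot H^{n-2}$ vanish, the discriminant vanishes. To conclude semistability itself, I would argue by contradiction using the $f$-invariance: if $\SF$ were not $H$-semistable, its maximal destabilizing subsheaf $\SG \subset \SF$ would be unique, hence preserved under the isomorphism $f^{[*]}\SF \isom \SF$, so $f^*c_1(\SG) \equiv c_1(\SG)$ and Lemma \ref{lem-eig} gives $c_1(\SG)\equiv 0$; but then $\mu_H(\SG)=0=\mu_H(\SF)$, contradicting that $\SG$ strictly destabilizes. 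Thus $\SF$ is $H$-slope semistable.

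\emph{Main obstacle.} The hard part is making the pullback identification $f^{[*]}\hat{\Omega}_X^1(\log D)\isom \hat{\Omega}_X^1(\log D)$ rigorous as reflexive sheaves across the singular locus, and ensuring the Chern-class computations are genuinely insensitive to the codimension-$2$ and codimension-$3$ defects; this is precisely where Proposition \ref{codim3-prop} and the smooth-in-codimension-$2$ hypothesis are essential, and where the bookkeeping of the logarithmic ramification formula must be done carefully so that no ramification contribution along $D$ corrupts the numerical identity.
```
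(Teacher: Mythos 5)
Your proposal for part (1) matches the paper's computation only in the $c_2$ case; for the two $c_1$ statements you route through Lemma \ref{lem-eig}, and this is a genuine gap. That lemma applies to Cartier $\R$-divisors, whereas $c_1(\hat{\Omega}_X^1(\log D))=K_X+D$ is a priori only a Weil divisor class: Proposition \ref{mix-prop} (and Theorem \ref{thm-pitrivial}, where it is used) makes no $\Q$-Cartier assumption on $K_X+D$. For a non-$\Q$-Cartier Weil class, numerical equivalence and intersection against arbitrary curves are not even defined in the paper's framework; the numbers in (1) only make sense because general complete-intersection curves and surfaces avoid the codimension-$\ge 3$ locus where the sheaf fails to be locally free. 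The same objection applies when you later invoke Lemma \ref{lem-eig} for $c_1(\SG)$ of a destabilizing subsheaf. The fix is available in your own text: run the argument you give for $c_2$ (which is exactly the paper's argument) for all three numbers, i.e., equate $q^{n-i}\,c_i\cdot H^{n-i}$ with $(\deg f)\,c_i\cdot H^{n-i}=q^n\,c_i\cdot H^{n-i}$ via the projection formula and the identification $f^*c_i=c_i$ coming from the isomorphism $f^*\hat{\Omega}_X^1(\log D)\to\hat{\Omega}_X^1(\log D)$ over $f^{-1}(U)$.

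For part (2), two issues. First, the Bogomolov--Gieseker paragraph is logically backwards: semistability implies the discriminant inequality, not conversely, so vanishing of the discriminant proves nothing about semistability; fortunately you do not actually use it. Second, and more seriously, your real argument rests on the claim that the maximal destabilizing subsheaf $\SG$ is ``preserved under the isomorphism $f^{[*]}\SF\isom\SF$'' because it is unique. Since $f$ is a finite cover of degree $q^n>1$ and not an automorphism, this is not a formal consequence of uniqueness: it is equivalent to the statement that pullback under the finite map $f$ carries the Harder--Narasimhan filtration to the Harder--Narasimhan filtration, i.e., that finite pullback preserves slope-semistability. That fact is true in characteristic zero but requires a Galois-closure/descent argument (and is false for Frobenius pullback in positive characteristic, which is likewise ``polarized''), and it needs extra care here since the identification $f^{[*]}\SF\isom\SF$ holds only off a codimension-$3$ set on a singular $X$. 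The paper's proof avoids Harder--Narasimhan theory entirely and is more robust: if some coherent subsheaf $\mathcal{F}\subset\hat{\Omega}_X^1(\log D)$ had $\mu_H(\mathcal{F})>0$, then for $g=f^k$ the sheaf $i_*\bigl((g^*\mathcal{F})|_{g^{-1}(U)}\bigr)$ is again a subsheaf of $\hat{\Omega}_X^1(\log D)$ (left-exactness of $i_*$, reflexivity, and $\Codim(X\setminus g^{-1}(U))\ge 3$) with slope $q^k\mu_H(\mathcal{F})$, which for $k\gg 1$ exceeds the finite supremum $s$ of slopes of all subsheaves --- a contradiction. If you replace your invariance claim by this bootstrapping (or supply a proof of the pullback-preserves-HN statement), your part (2) becomes correct.
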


\begin{proof} By Proposition \ref{codim3-prop}, there is a smooth open subset $U\subseteq X$ such that $D \cap U$ is a normal crossing divisor and $\Codim(X \backslash U) \ge 3$. Since $f|_{X\backslash D}$ is quasi-\'etale, $f|_{f^{-1}(U)\backslash D}$ is \'etale by the purity of branch loci.

There is a natural morphism $\varphi:f^*\hat{\Omega}_X^1(\log D)\to \hat{\Omega}_X^1(\log D)$ and $\varphi|_{f^{-1}(U)}$ is an isomorphism.
So for $1\le i\le 2$, we have $$f^*c_i(\hat{\Omega}_X^1(\log D))=c_i(f^*\hat{\Omega}_X^1(\log D))= c_i(\hat{\Omega}_X^1(\log D)).$$
Then $$q^{n-i}c_i(\hat{\Omega}_X^1(\log D))\cdot H^{n-i}=f^*c_i(\hat{\Omega}_X^1(\log D)\cdot (f^*H)^{n-i}=(\deg f)c_i(\hat{\Omega}_X^1(\log D)\cdot H^{n-i}$$
implies $c_i(\hat{\Omega}_X^1(\log D))\cdot H^{n-i}= 0$ for $1\le i\le 2$,
noting that $\deg f = q^n > 1$.
The proof for $c_1(\hat{\Omega}_X^1(\log D))^2\cdot H^{n-2}=0$, is similar.
%

For (2), suppose the contrary that $\hat{\Omega}_X^1(\log D)$ is not $H$-slope semistable.
Then there is a coherent subsheaf $\mathcal{F}\subset \hat{\Omega}_X^1(\log D)$ such that
$$\mu_H(\mathcal{F}):=\frac{c_1(\mathcal{F})\cdot H^{n-1}}{\rank \mathcal{F}}>0.$$
Note that $$s=\sup\{\mu_H(\mathcal{F})\,|\, \mathcal{F}\subset \hat{\Omega}_X^1(\log D)\}<\infty.$$
So for some $k\gg 1$, $\mu_H(g^*\mathcal{F})=q^k\mu_H(\mathcal{F})>s$ with $g=f^k$.
Let $i:f^{-1}(U)\hookrightarrow X$ be the inclusion map and let $\mathcal{G}:=i_*((g^*\mathcal{F})|_{g^{-1}(U)})$.
Then $\mu_H(\mathcal{G})=\mu_H(g^*\mathcal{F})>s$.
Note that $(g^*\mathcal{F})|_{g^{-1}(U)}$ is a subsheaf of the locally free sheaf $(g^*\hat{\Omega}_X^1(\log D))|_{g^{-1}(U)}\cong \hat{\Omega}_X^1(\log D)|_{g^{-1}(U)}$.
Since $\Codim(X \backslash g^{-1}(U)) \ge 3$ and $i_*$ is left exact,
$\mathcal{G}$ is a coherent subsheaf of the reflexive sheaf $\hat{\Omega}_X^1(\log D)$ .
So we get a contradiction and (2) is proved.
\end{proof}

\begin{theorem}\label{HN-thm}
Let $X$ be a normal projective variety
which is smooth in codimension $2$, and $D\subset X$ a reduced divisor such that:
\begin{itemize}
\item[(i)]
there is a Weil $\Q$-divisor $\Gamma$ such that the pair $(X, \Gamma)$ has only klt singularities;
\item[(ii)]
there is a polarized endomorphism $f:X\to X$ such that $f^{-1}(D)=D$ and $f|_{X\backslash D}$ is quasi-\'etale; and \item[(iii)]
the algebraic fundamental group $\pi_1^{\alg}(X_{\reg})$ of the smooth locus $X_{\reg}$ of $X$ is trivial.
\end{itemize}
Then $\hat{\Omega}_X^1(\log D)$ is free, i.e., isomorphic to $\mathcal{O}_X^{\oplus n}$,
where $n := \dim X$.
\end{theorem}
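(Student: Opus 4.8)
The plan is to establish freeness of $\hat{\Omega}_X^1(\log D)$ by combining the numerical vanishing and semistability from Proposition \ref{mix-prop} with a suitable characterization of (semistable, Chern-trivial) reflexive sheaves as being flat, and then upgrading flatness to triviality using the triviality of $\pi_1^{\alg}(X_{\reg})$. First I would note that by Proposition \ref{mix-prop}, the reflexive sheaf $\SE := \hat{\Omega}_X^1(\log D)$ is $H$-slope semistable and satisfies the Chern-class conditions
$$c_1(\SE)\cdot H^{n-1} = c_1(\SE)^2\cdot H^{n-2} = c_2(\SE)\cdot H^{n-2} = 0.$$
The natural framework here is a Bogomolov--Gieseker/Miyaoka-type inequality for reflexive sheaves on klt (or mildly singular) spaces: since $(X,\Gamma)$ is klt by hypothesis (i), one has access to the theory of reflexive differentials and the orbifold Chern-class machinery on klt varieties (à la Greb--Kebekus--Peternell and the characterization of flat sheaves thereon). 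A semistable reflexive sheaf whose first and second Chern classes vanish against the appropriate powers of $H$ should be a \emph{flat} (local-system-associated) sheaf, i.e. it should arise from a representation of $\pi_1^{\alg}(X_{\reg})$ restricted to the smooth locus where $\SE$ is locally free.

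Next I would make the flatness precise. On the big open smooth locus $U$ from Proposition \ref{codim3-prop} (where $\Codim(X\setminus U)\ge 3$ and $\SE|_U$ is locally free), the vanishing of the slope and discriminant, together with semistability, forces $\SE|_U$ to be a locally free sheaf with a flat projective connection, hence associated to a representation of the fundamental group of $U$. The key technical input is an extension/equivalence of fundamental groups: because $\Codim(X\setminus U)\ge 3\ge 2$, the algebraic fundamental group of $U$ agrees with $\pi_1^{\alg}(X_{\reg})$ (removing a codimension-$\ge 2$ set does not change $\pi_1^{\alg}$), and by hypothesis (iii) this group is trivial. A trivial fundamental group means every flat sheaf (equivalently, every sheaf underlying a representation of $\pi_1^{\alg}(X_{\reg})$) is the trivial sheaf $\SO^{\oplus n}$. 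Therefore $\SE|_U \cong \SO_U^{\oplus n}$.

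Finally I would push the triviality back across the codimension-$\ge 3$ gap by reflexivity. Since $\SE = \hat{\Omega}_X^1(\log D)$ is reflexive and agrees with $\SO_X^{\oplus n}$ on $U$ whose complement has codimension $\ge 2$, we get
$$\SE = j_*(\SE|_U) \cong j_*(\SO_U^{\oplus n}) = \SO_X^{\oplus n},$$
using that $j_*\SO_U = \SO_X$ for the normal variety $X$ and the open immersion $j:U\hookrightarrow X$. This yields freeness of $\hat{\Omega}_X^1(\log D)$ as claimed. The main obstacle I anticipate is the first step: correctly invoking a Bogomolov--Gieseker inequality and the characterization of Chern-trivial semistable reflexive sheaves as flat in the \emph{singular} (klt) setting. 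In the smooth case this is the classical theorem of Mehta--Ramanathan and Simpson--Uhlenbeck--Yau type correspondence (as used in \cite{HN}), but transporting it to a normal variety that is only smooth in codimension $2$ requires the klt hypothesis (i) to access reflexive differentials and the extension theorem of \cite{GKKP}, and care that the orbifold/reflexive Chern classes are the ones being controlled by Proposition \ref{mix-prop}. Verifying that the flat structure on $U$ genuinely corresponds to a $\pi_1^{\alg}$-representation (rather than a topological one) and that hypothesis (iii) is exactly the vanishing needed is the delicate matching point of the argument.
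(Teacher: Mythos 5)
Your proposal is correct and follows essentially the same route as the paper: the paper's entire proof (for $n\ge 2$) is to combine Proposition \ref{mix-prop} with \cite[Theorem 1.20]{GKP}, which is precisely the Greb--Kebekus--Peternell characterization of $H$-semistable reflexive sheaves with vanishing $c_1\cdot H^{n-1}$, $c_1^2\cdot H^{n-2}$, $c_2\cdot H^{n-2}$ on klt spaces that you invoke, and your subsequent steps (flatness, triviality from $\pi_1^{\alg}(X_{\reg})=\{1\}$, reflexive extension across the small locus) are exactly what that citation packages. The only omission is the trivial case $n=1$, which the paper dispatches separately since Proposition \ref{mix-prop} requires $n\ge 2$.
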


\begin{proof} The case $n = 1$ is clear. If $n \ge 2$, apply Proposition \ref{mix-prop} and \cite[Theorem 1.20]{GKP}.
\end{proof}

\begin{proof}[Proof of Theorem \ref{thm-pitrivial}]
If $\dim(X)=1$, then $X\cong\mathbb{P}^1$ since $\pi_1^{\alg}(X_{\reg})$ is trivial and $\deg f>1$.
Clearly, $\ell(D)=\deg D=2$ and $r(D)=1$; see Remark \ref{rmk-comp}. So $c(X,D)=0$.
Now we may assume $\dim(X)>1$.
By Theorem \ref{HN-thm}, $\hat{\Omega}_{X}^1(\log D)$ is free.
Since $(X,\Gamma)$ is klt, $X$ has rational singularities.
So we have $\widetilde{q}(X) = q(X) = 0$ by the assumption.
Thus $c(X,D)\le\widetilde{q}(X)=0$ (cf.~Theorem \ref{thm-complexity}).
\end{proof}

\begin{proof}[Proof of Corollary \ref{smooth-cor}]
By Theorem \ref{thm-pitrivial}, $c(X,D)\le 0$. So $(X,D)$ is a toric pair by Remark \ref{rmk-comp} and \cite[Theorem 1.2]{BMSZ} or Theorem \ref{thm-bmsz}.
\end{proof}

\begin{proof}[Proof of Corollary \ref{toric-lift-cor}]
Since $X$ is of Fano type, there is a Weil $\Q$-divisor $\Delta$ such that
the pair $(X,\Delta)$ is klt and $-(K_X+\Delta)$ is ample.
By \cite[Theorem 1.13]{GKP}, $\pi_1^{\alg}(X_{\reg})$ is finite.
By Lemma \ref{uqc}, $X$ admits a universal quasi-\'etale cover $\pi:\widetilde{X}\to X$, such that $\pi_1^{\alg}(\widetilde{X}_{\reg})$ is trivial and $f$ lifts to $\widetilde{f}$ on $\widetilde{X}$.
Note that $\widetilde{X}$, like $X$, is still smooth in codimension $2$ by the purity of branch loci.

Let $\widetilde{D}:=\pi^{-1}(D)$ and $\widetilde{\Delta}:=\pi^*(\Delta)$.
Then $K_{\widetilde{X}}+\widetilde{D}$ equals $\pi^*(K_X+D)$ and hence is $\Q$-Cartier
(and numerically trivial; see Remark \ref{rmk-comp}).
Also $K_{\widetilde{X}}+\widetilde{\Delta}$ equals $\pi^*(K_X+\Delta)$ and is hence anti-ample.
Note that $(\widetilde{X}, \widetilde{\Delta})$ is also klt (cf.~\cite[Proposition 5.20]{KM}).
Therefore, $\widetilde{X}$ is also of Fano type and $q(\widetilde{X}) = 0$ by the
Kawamata-Viehweg vanishing.
Since $\widetilde{f}$ is the lifting of $f$, it is polarized and $\widetilde{f}^{-1}(\widetilde{D})=\widetilde{D}$.
Since both $\pi:\widetilde{X}\to X$ and $f|_{X \setminus D}$ are quasi-\'etale,
so is $\widetilde{f}|_{\widetilde{X}\backslash \widetilde{D}}$.

Thus Theorem \ref{thm-pitrivial} is applicable: $c(\widetilde{X},\widetilde{D})\le 0$. So $(\widetilde{X},\widetilde{D})$ is a toric pair by Remark \ref{rmk-comp} and \cite[Theorem 1.2]{BMSZ} or Theorem \ref{thm-bmsz}.
\end{proof}

\end{document}